\newtheorem{theorem}{Theorem}[section]
\newtheorem{lemma}[theorem]{Lemma}
\newtheorem{remark}[theorem]{Remark}
\renewcommand{\(}{\left(}
\renewcommand{\)}{\right)}
\renewcommand{\[}{\left[}
\renewcommand{\]}{\right]}
\newcommand{\<}{\left\langle}
\renewcommand{\>}{\right\rangle}
 \newcommand{\rr}{ \mathbb{R}}
\begin{document}
\title{Non degeneracy of critical points of the  Robin  function with respect to deformations of the domain}
\author{Anna Maria Micheletti\thanks{
{\sc Dipartimento di Matematica Applicata "U.Dini", Universit\`a di Pisa, Via Filippo Buonarroti, 1
56127 Pisa, Italy}
\textit{E-mail address}: a.micheletti@dma.unipi.it }\and  Angela
Pistoia
\thanks{{\sc  Dipartimento di Scienze di Base e Applicate per l'Ingegneria, Universit\`a
di Roma ``La Sapienza", via A. Scarpa 16, 00161 Roma, Italy.}
\textit{E-mail address}:  pistoia@dmmm.uniroma1.it }}
\maketitle
\date{}
\begin{abstract}
We show a result of genericity for non degenerate critical points of the Robin  function with respect to deformations of the domain.

 {\bf Keywords}: Robin  function, non degenerate critical points, generic property

{\bf  AMS subject classification}: 35J08, 35J25, 35G30

\end{abstract}

\section{Introduction}

Let   $\Omega$ be a smooth bounded domain in $\rr^N,$  $N\ge2. $
The Green  function of the Laplace operator vanishing at the boundary $\partial\Omega$ is of the form
\begin{equation}\label{green}
G_y(x)={1\over\omega_N}\[\Gamma_y(x)-H_y(x)\],\ x,y\in\Omega,
\end{equation}
with
where $\omega_N$ denotes the surface area of the unit sphere in $\rr^N.$
The singular part $\Gamma_y$ is given by $\Gamma_y(x)=\Gamma\(|x-y|\)$
\begin{equation}\label{Gamma}
 \Gamma\(|x-y|\)=
 -\ln|x-y|\  \hbox{if}\ N=2\quad \hbox{and}\quad
  \Gamma\(|x-y|\)={1\over N-2}|x-y|^{2-N}\   \hbox{if}\ N\ge3.
\end{equation}

The regular part $H_y$ is a harmonic function with the same boundary value as the singular part, i.e. for any $y\in\Omega$
\begin{equation}\label{Gamma2}
\left\{\begin{aligned}
&\Delta_x H_y(x)=0 \ &\hbox{if}\ x\in\Omega\\
  &H_y(x)=\Gamma_y(x) \ &\hbox{if}\ x\in\partial\Omega.\\
\end{aligned}\right.
\end{equation}

The {\em Robin function } of $\Omega$ is   defined by
$t(x):=t^\Omega(x):=H_x(x),\ x\in\Omega.
$

This function plays an important role in various fields of the mathematics, e.g., geometric function
theory, capacity theory, concentration problems (see \cite{BF} and the references therein).

In particular,
existence    and uniqueness of solutions of some critical problems
is strictly dependent on the  non degeneracy of critical points of the Robin function (see, for example, \cite{BLR,GJP,P,R}.
Non degenerate critical points of the Robin function plays also a crucial role in
studying existence and uniqueness of solutions of the Gelfand's problem  (see for example \cite{BP,DKM,EGP,GG,MW}).

As far as we know, the only results about non degeneracy of critical points of the Robin function are in \cite{CF} and \cite{G}. In \cite{CF} the
authors show that  the Robin function
of a smooth bounded and convex domain of $\rr^2$ has a unique critical point which is non degenerate. In \cite{G} the
author   proves that   the origin is a non degenerate critical point of the Robin function
of a smooth bounded domain of $\rr^N$ which is symmetric with respect
to the origin  and convex  in any directions $x_1,\dots,x_N.$

  Here we prove that for {\em most domains  the critical points of the Robin function are non degenerate.}

 \medskip

 Let $\Omega\subset\rr^N$ be a domain of class $C^k$ with $k\ge4$ and $N\ge2.$ We consider the domain
 $\Omega_\theta:=(I+\theta)\Omega$ given by the deformation $I+\theta.$ Here $I$ is the identity map on $\rr^N.$

We are interested in studying the non degeneracy of the critical points of the Robin function of the
domain $\Omega_\theta$ with respect to the parameter $\theta.$

Let $\mathfrak{E}^k $ be the vector space of all the $C^k$ applications $\theta:\rr^N\to\rr^N$ such that
\begin{equation}\label{1}\|\theta\|_k:=\sup\limits_{x\in\rr^N}\max\limits_{0\le|\alpha|\le k}\left|{\partial ^\alpha\theta_i(x)}\over\partial {x_1}^{\alpha_1}\dots\partial {x_N}^{\alpha_N}\right|<+\infty.\end{equation}
 $\mathfrak{E}^k $ is a Banach space equipped with the norm $\|\cdot\|_k.$
Let $\mathfrak{B}_\rho:=\left\{\theta\in \mathfrak{E}^k \ :\ \|\theta\|_k\le\rho\right\}$
be the ball in $\mathfrak{E}^k$ centered at $0$ with radius $\rho.$
We will prove the following result.
\begin{theorem}\label{main}
The set
 $\mathfrak{A}:=\left\{\theta\in \mathfrak{B}_\rho\ :\ \right.$  all the critical points of the  Robin function  of the domain $\Omega_\theta$ are non degenerate $\left.\right\}$
 is a residual (hence dense) subset of $\mathfrak{B}_\rho,$ provided $\rho$ is small enough.\end{theorem}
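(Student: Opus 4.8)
The plan is to apply an abstract transversality theorem of Quinn--Saut--Uhlenbeck type. Concretely, I would consider the map that sends a deformation $\theta$ and a point $x\in\Omega$ to the gradient $\nabla t^{\Omega_\theta}$ of the Robin function, suitably pulled back to the fixed reference domain $\Omega$. Writing $\tau(\theta,x):=\nabla_x\big(H^{\Omega_\theta}_{(I+\theta)x}((I+\theta)x)\big)$, a critical point of the Robin function of $\Omega_\theta$ corresponds to a zero of $\tau(\theta,\cdot)$, and non-degeneracy of that critical point means exactly that $\partial_x\tau(\theta,x)$ is invertible there. So the set $\mathfrak A$ is precisely the set of $\theta$ for which $0$ is a regular value of $\tau(\theta,\cdot)$. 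The abstract theorem then says: if $\tau:\mathfrak B_\rho\times\Omega\to\rr^N$ is of class $C^1$ (which forces $k$ to be large enough, whence the hypothesis $k\ge4$, since one loses derivatives passing through the Green-function machinery), if $0$ is a regular value of $\tau$ itself, and if the partial map $\tau(\theta,\cdot)$ restricted to the zero set is Fredholm of index $0$ (automatic here, both spaces being $N$-dimensional), and finally if the zero set is contained in a fixed compact subset (so a $\sigma$-compactness/properness condition holds), then the set of $\theta$ for which $0$ is a regular value of $\tau(\theta,\cdot)$ is residual in $\mathfrak B_\rho$.

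The key steps, in order, are: (i) derive a workable formula for the dependence of $H^{\Omega_\theta}_y$ on $\theta$ — using the change of variables $x\mapsto(I+\theta)x$, one transforms the Dirichlet problem \eqref{Gamma2} on $\Omega_\theta$ into a uniformly elliptic problem with $\theta$-dependent coefficients on the fixed domain $\Omega$, valid for $\rho$ small so that $I+\theta$ is a diffeomorphism; (ii) establish differentiability of $\theta\mapsto H^{\Omega_\theta}_{\cdot}(\cdot)$ and hence of $\tau$, with $C^1$ regularity in $(\theta,x)$ jointly, by elliptic regularity and the implicit function theorem applied to the transformed PDE (this is where the $C^4$ assumption on $\partial\Omega$ is consumed); (iii) verify the transversality condition at the level of the full map $\tau$, i.e. show that at any zero $(\theta_0,x_0)$ of $\tau$ the differential $D\tau(\theta_0,x_0)$ is onto $\rr^N$ — and here it suffices to show that $\theta\mapsto\tau(\theta,x_0)$ alone already has surjective differential at $\theta_0$, which one proves by exhibiting, for each prescribed vector $v\in\rr^N$, an explicit one-parameter family of deformations (e.g. supported near $x_0$, or near $\partial\Omega$) whose effect on $\nabla t$ at $x_0$ is $v$; (iv) handle compactness: since any critical point of $t^{\Omega_\theta}$ lies in $\Omega_\theta$ and, by the well-known fact that $t^\Omega(x)\to+\infty$ as $x\to\partial\Omega$ together with a uniform-in-$\theta$ version of this blow-up for $\rho$ small, all zeros of $\tau(\theta,\cdot)$ stay in a fixed compact $K\subset\subset\Omega$; (v) invoke the abstract transversality theorem to conclude that $\mathfrak A$ is residual, and note that $\mathfrak B_\rho$, being an open ball in a Banach space, is a Baire space, so residual implies dense.

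The main obstacle I expect is step (iii), the transversality computation: one must show that the first variation of $\nabla_x t^{\Omega_\theta}(x_0)$ with respect to $\theta$ is surjective onto $\rr^N$. This requires the Hadamard-type formula for the shape derivative of the regular part $H_y$ of the Green function under a domain perturbation — differentiating the transformed Dirichlet problem in $\theta$ gives a PDE for $\dot H$ whose data involve $\theta\cdot n$ on $\partial\Omega$ — and then differentiating once more in $x$ and evaluating at $x_0$. One then has to argue that the resulting linear functional of $\theta$ cannot annihilate any fixed $v\in\rr^N$; a clean way is to localize: choose $\theta$ vanishing near $x_0$ so that $\Gamma_y$ is unaffected, reducing the question to the harmonic extension, and then pick a boundary normal perturbation concentrated near a suitable boundary point to move $\nabla_x H$ in the desired direction, using the strict positivity / non-vanishing of the relevant Poisson-kernel-type derivatives. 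Steps (ii) and (iv) are more technical than deep: (ii) is a by-now-standard application of elliptic theory plus the implicit function theorem to the coefficient-perturbed operator, and (iv) is a soft compactness argument relying on the boundary blow-up of the Robin function, uniform for small $\rho$.
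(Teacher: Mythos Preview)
Your strategy matches the paper's: apply the abstract transversality theorem to $F(x,\theta)=\nabla_x\widetilde v^\theta_\xi(x)\big|_{\xi=x}$, with $C^1$-dependence on $\theta$ obtained by differentiating the pulled-back elliptic problem (this is indeed where $k\ge4$ is consumed), Fredholm index zero automatic since $X=Z=\rr^N$, and $\sigma$-properness handled by a compact exhaustion $\Omega_s\times\overline{\mathfrak B_{\rho-1/s}}$ of $\Omega\times\mathfrak B_\rho$ (the paper does not invoke the boundary blow-up of the Robin function; compactness of each $\Omega_s$ suffices). The substantive difference lies in the transversality step. You propose a localized boundary normal perturbation near a single well-chosen boundary point, together with Poisson-kernel positivity; the paper instead, after reducing to $\bar\theta=0$ by a change of variables, takes for each coordinate direction $q$ a test deformation supported in a thin collar of the \emph{entire} boundary,
$$\alpha^{(q)}_q(\eta)=|\eta-\bar\eta|^N\,\chi\Bigl(\bigl[\mathrm{dist}(\eta,\partial\Omega_{\bar\theta})\bigr]^a\Bigr),\qquad \alpha^{(q)}_i=0\ \hbox{for}\ i\ne q,$$
with $a\ge4$. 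The factor $|\eta-\bar\eta|^N$ is chosen precisely to cancel the denominator in the linearized boundary data, so that in Green's representation the boundary contribution to $\partial_{\eta_p}w_{\bar\eta}^{\bar\theta}[\alpha^{(q)}](\bar\eta)$ becomes $\delta_{pq}\int_{\partial\Omega_{\bar\theta}}\partial_\nu G(\eta,\bar\eta)\,d\sigma\ne0$, while the interior contribution is $o(1)$ as the collar width $\bar\rho\to0$ (using $|G|=O(\bar\rho)$ on the collar and the high power $a$ to control the derivatives of $\chi(d^a)$). This algebraic cancellation makes the surjectivity verification completely explicit; your heuristic is plausible but would still owe an argument that the $N$ components of the shape-derivative kernel are linearly independent as functions on $\partial\Omega$, which is not immediate.
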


To get Theorem \ref{main} we use an abstract transversality theorem previously used by Quinn \cite{Q}, Saut and Temam \cite{ST} and Uhlenbeck \cite{U}.
The strategy in our work is similar to the one used by Saut and Temam in \cite{ST} to get some generic property with respect to the domain of the solutions
to certain semilinear elliptic equations. In our case  we need some new delicate estimates which involve   the derivative of
Robin function with respect to the variation of the domain.

The paper is organized as follows. In Section 2 we set the problem and  we prove the main result.  All the technical results are  proved in Section 3
and in Section 4.

\section{Setting of the problem and proof of the main result}

First of all let us recall some useful properties of the Robin function (see \cite{BF}).

\begin{remark}\label{pag2}
If $\Omega$ is of class $C^{2,\alpha}$ then the Robin function $t\in C^{2,\alpha}(\overline\Omega)$ and it holds
\begin{align}\label{2}
&\nabla t(x)=2\nabla_x H_y(x)_{|_{y=x}}\quad\hbox{and}\quad{\partial^2 t\over\partial x_i\partial x_j}(x)=4{\partial^2 H_y\over\partial x_i\partial x_j}(x)_{|_{y=x}}
\end{align}

\end{remark}

Given  $\Omega\subset\rr^N,$  $N\ge2 $ of class $C^k$ with $k\ge3,$ we consider the domain $\Omega_\theta:=(I+\theta)\Omega$ with $\theta\in
\mathfrak{B}_\rho.$ It is well known that we can choose
$\rho$ positive and small enough such that if $\theta\in \mathfrak{B}_\rho$ then the map $I+\theta:\overline\Omega\to(I+\theta)\overline\Omega$ is a diffeomorphism of class $C^k.$   We set
$I+\gamma= (I+\theta)^{-1}.$

\begin{remark}\label{pag2bis}
Since, by definition $(I+\theta)\circ(I+\gamma) =I$ we have that $\gamma(z)=-\theta\(z+\gamma(z)\).$ Moreover, it holds
\begin{equation}\label{star}
\[I+\theta'\(z+\gamma(z)\)\] \(h+\gamma'(z)(h)\)=h\quad\forall\ h\in \rr^N.\end{equation}
Then we have
$$\[I+\theta'\(z+\gamma(z)\)\] \circ \gamma'(z)=-\theta'\(z+\gamma(z)\)$$
which implies
\begin{equation}\label{star2}
\gamma'(z)=-\[I+\theta'\(x\)\]^{-1}\circ\theta'(x)=\sum\limits_{i\ge0}(-1)^{i+1}\[\theta'(x)\]^{i+1}\quad\hbox{where} \ x=z+\gamma(z).\end{equation}
Moreover by \eqref{star}
$$\[I+\theta'\(z+\gamma(z)\)\] \circ \gamma''(z)(h)(k)=-\theta''\(z+\gamma(z)\)\(h+\gamma'(z)(h)\)\(k+\gamma'(z)(k)\)\quad\forall\ h,k\in \rr^N$$
Then we  have if $x=z+\gamma(z)$
\begin{align*} &\gamma''(z)(h)(k)= \sum\limits_{i\ge0}(-1)^{i  }\[\theta'(x)\]^{i  }\[-\theta''\(x\)\(h+\gamma'(z)(h)\)\(k+\gamma'(z)(k)\)\]\\
&=-\theta''\(x\)\(h+\gamma'(z)(h)\)\(k+\gamma'(z)(k)\) -\sum\limits_{i\ge1}(-1)^{i  }\[\theta'(x)\]^{i  }\[ \theta''\(x\)\(h+\gamma'(z)(h)\)\(k+\gamma'(z)(k)\)\]
 .\\ \end{align*}

\end{remark}

Given $y\in\Omega_\theta$,
we consider the unique function $v_y^\theta$ solution of the problem
\begin{equation}\label{4}
\left\{\begin{aligned}
&\Delta_z v_y^\theta(z)=0 \ &\hbox{if}\ z\in\Omega_\theta\\
  &v_y^\theta(z)=\Gamma\(|z-y|\) \ &\hbox{if}\ z\in\partial\Omega_\theta.\\
\end{aligned}\right.
\end{equation}

More precisely, $v_y^\theta$ is the regular part of the Green's function of the domain $\Omega_\theta.$ We have that
$v_y^\theta\in C^{2,\alpha}(\overline\Omega_\theta)$ because $
k\ge3.$ If $\xi\in\Omega$ is such that $\xi+\theta(\xi)=y,$ we define the function $\widetilde v^\theta_\xi\in C^{2,\alpha}(\overline\Omega )$ by

\begin{equation}\label{5}
\widetilde v^\theta_\xi(x):=v_y^\theta\(x+\theta(x)\)=v^\theta_y(z).
\end{equation}

The function $\widetilde v^\theta_\xi$ is the unique solution of the following problem
\begin{equation}\label{6}
\left\{\begin{aligned}
 &\sum\limits_{i,j,s=1}^N\left.{\partial ^2\widetilde v^\theta_\xi\over\partial x_i\partial x_j} \right|_x\[\delta_{is}+
\left.{\partial\gamma_i\over\partial z_s} \right|_{x+\theta(x)} \]\[\delta_{js}+
\left.{\partial\gamma_j\over\partial z_s} \right|_{x+\theta(x)} \] +\sum\limits_{ j,s=1}^N\left.{\partial  \widetilde v^\theta_\xi\over\partial x_ j}\right|_x\left.{\partial ^2\gamma_j\over\partial z_s^2}\right|_{x+\theta(x)}  =0 \ &\hbox{if}\ x\in\Omega \\
  &\widetilde v^\theta_\xi(x)=\Gamma\(|x-\xi+\theta(x)-\theta(\xi)|\) \ &\hbox{if}\ x\in\partial\Omega .\\
\end{aligned}\right.
\end{equation}

When $\theta=0$ we   obviously have that $\widetilde v^0_\xi$ is the unique solution of the problem
\begin{equation}\label{6bis}
\left\{\begin{aligned}
 &\Delta_x \widetilde v^0 =0 \ &\hbox{if}\ x\in\Omega \\
  &\widetilde v^0_\xi(x)=\Gamma\(|x-\xi|\) \ &\hbox{if}\ x\in\partial\Omega .\\
\end{aligned}\right.
\end{equation}

\begin{remark}\label{pag3}
It is easy to see that there exists a $C^3-$extension of the function $\Gamma(|z-y|)$ for $z\in\partial\Omega_\theta$ on the domain $\Omega_\theta$, $z\to\Gamma(|z-y|)\chi_d(|z-y|).$ Here the smooth cut off function $\chi_d$ is such that
$$\chi_d(s)=0\ \hbox{if}\ 0<s<d,\ \chi_d(s)=1\ \hbox{if}\ s>2d,\ |\chi'(s)|<{c\over d},\ |\chi''(s)|<{c\over d^2}$$
where  $d=\mathrm{dist}(y,\partial\Omega_\theta)/3,$ for some constant $c>1.$

Since $\widetilde v^\theta_\xi$ solves \eqref{6}, by maximum principle and standard elliptic regularity theory (see Theorem 6.6, \cite{GT}) we get
$$\|\widetilde v^\theta_\xi\|_{C^{2,\alpha}(\overline\Omega)}\le c\[\|\widetilde v^\theta_\xi\|_{C^{0}(\overline\Omega)}+\|\varphi\|_{C^{2,\alpha}(\overline\Omega)}\]
\le c\[\sup\limits_{x\in\partial\Omega}\Gamma\(|x-\xi+\theta(x)-\theta(\xi)|\)+\|\varphi\|_{C^{2,\alpha}(\overline\Omega)}\],$$
where
$$\varphi(x):=\Gamma\(|x-\xi+\theta(x)-\theta(\xi)|\)\chi_d\(|x-\xi+\theta(x)-\theta(\xi)|\).$$

It is important to point out that  by standard regularity theory (see Theorem 6.6, \cite{GT}) we also   get that
 $\widetilde v^\theta_\xi\in C^{3,\alpha}(\overline\Omega) $ if $k\ge4.$

\end{remark}
Let us establish some properties of the function $\widetilde v^\theta_\xi.$

It is useful to point out that when $\theta=0$, for any $p=1,\dots,N$ the function $w^0_p:={\partial\over\partial x_p}\widetilde v^0_\xi$ is the unique solution of the following problem
\begin{equation}\label{7bis}
\left\{\begin{aligned}
 &\Delta_x   w^0_p   =0 \qquad \hbox{if}\ x\in\Omega \\
  &w_p^ 0(x)= {x_p-\xi_p\over |x -\xi |^N}
   \qquad \hbox{if}\ x\in\partial\Omega .\\
\end{aligned}\right.
\end{equation}

We are interested in studying the non degeneracy of the critical points of the Robin function of the domain
$\Omega_\theta$, namely by \eqref{2} and \eqref{4} the points $z\in\Omega_\theta$ such that
\begin{equation}\label{9}
0=\nabla_zt^{\Omega_\theta}(z)=2\nabla_zv^\theta_y(z)|_{y=z}.
\end{equation}

This is equivalent to study the non degeneracy of $x\in\Omega$ such that $0=\nabla_x\widetilde v^\theta_\xi(x)|_{\xi=x}.$
Thus, we are led to consider the map $F:\Omega\times \mathfrak{B}_\rho\to\rr^N$
defined by
\begin{equation}\label{10}
F(x,\theta):=\nabla_x\widetilde v^\theta_\xi(x)|_{\xi=x}.
\end{equation}
By Remark \ref{pag3} and Lemma \ref{lemma2} $F$ is a $C^1-$map.

 We shall apply the following abstract transversality theorem to the map $F$ (see \cite{Q,ST,U}).

 \begin{theorem}\label{tran}
 Let $X,Y,Z$ be three Banach spaces and $U\subset X,$ $V\subset Y$ open subsets.
 Let $F:U\times V\to Z$ be a $C^\alpha-$map with $\alpha\ge1.$ Assume that

 \begin{itemize}
 \item[i)] for any $y\in V$, $F(\cdot,y):U\to Z$ is a Fredholm map of index $l$ with $l\le\alpha;$
 \item[ii)] $0$ is a regular value of $F$, i.e. the operator $F'(x_0,y_0):X\times Y\to Z$ is onto at any point $(x_0,y_0)$ such that $F(x_0,y_0)=0;$
 \item[iii)] the map  $\pi\circ i:F^{-1}(0)\to Y$ is $\sigma-$proper, i.e.  $F^{-1}(0)=\cup_{s=1}^{+\infty} C_s$
 where $C_s$ is a closed set and the restriction $\pi\circ i_{|_{C_s}}$ is proper for any $s$; here $i:F^{-1}(0)\to Y$ is the canonical embedding and $\pi:X\times Y\to Y$ is the projection.
 \end{itemize}

 Then the set
$\Theta:=\left\{y\in V\ :\ 0\ \hbox{is  a regular value of } F(\cdot,y)\right\}$
 is a  residual subset of $V$, i.e. $V\setminus \Theta$ is a countable union of closet subsets without interior points.

\end{theorem}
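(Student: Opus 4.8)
The plan is to reduce the theorem to the Sard--Smale (infinite dimensional Sard) theorem applied to the projection $\pi$ restricted to the zero set $M:=F^{-1}(0)\subset U\times V$, with hypothesis iii) used to upgrade the abstract ``meager'' conclusion of Sard--Smale to the sharper ``countable union of closed sets without interior'' that is asserted. First I would show that $M$ is a $C^\alpha$ Banach submanifold of $U\times V$ with $T_{(x_0,y_0)}M=\ker F'(x_0,y_0)$ at each point. Fix $(x_0,y_0)\in M$ and write $L:=F'(x_0,y_0)$, $A:=D_xF(x_0,y_0)$, $B:=D_yF(x_0,y_0)$, so that $L(h,k)=Ah+Bk$. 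By ii) $L$ is onto, and by i) $A$ is Fredholm of index $l$; decomposing $X=\ker A\oplus X_1$ and $Z=\mathrm{im}\,A\oplus Z_1$ with $d_1:=\dim Z_1<\infty$ and letting $Q:Z\to Z_1$ be the projection with kernel $\mathrm{im}\,A$, surjectivity of $L$ forces $QB:Y\to Z_1$ to be onto, so $\ker(QB)$ is closed of codimension $d_1$ in $Y$. One then checks that $\ker L$ is the direct sum of $\ker A\times\{0\}$ and the graph over $\ker(QB)$ of the bounded operator $k\mapsto-(A|_{X_1})^{-1}(Bk)$; hence $\ker L$ is complemented in $X\times Y$ and the implicit function theorem gives the submanifold structure near $(x_0,y_0)$.

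Next -- and this is the \emph{technical core} -- I would prove that $\pi|_M:M\to Y$ is a $C^\alpha$ Fredholm map of index $l$ and that $d(\pi|_M)_{(x_0,y_0)}$ is onto if and only if $A=D_xF(x_0,y_0)$ is onto. From the description of $\ker L$ above one reads off $\ker d(\pi|_M)_{(x_0,y_0)}=\ker A\times\{0\}$, which is finite dimensional, and $\mathrm{im}\,d(\pi|_M)_{(x_0,y_0)}=\ker(QB)$, which is closed of codimension $d_1=\mathrm{codim}\,\mathrm{im}\,A$; hence the index equals $\dim\ker A-\mathrm{codim}\,\mathrm{im}\,A=l$, and $d(\pi|_M)_{(x_0,y_0)}$ is onto precisely when $Z_1=0$, i.e.\ when $A$ is onto. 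Therefore $y\in\Theta$ if and only if $y$ is a regular value of $\pi|_M$ (a point outside $\pi(M)$ being, as usual, a regular value, which is consistent since then $F(\cdot,y)^{-1}(0)=\emptyset$), so that $V\setminus\Theta=\pi(\Sigma)$, where $\Sigma\subset M$ is the critical set of $\pi|_M$. Since surjectivity is an open condition among Fredholm operators of index $l$, $\Sigma$ is closed in $M$.

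Finally I would conclude. By the Sard--Smale theorem applied to the $C^\alpha$ Fredholm map $\pi|_M$ of index $l$ (the hypothesis $l\le\alpha$ being what this theorem requires), the set of critical values $\pi(\Sigma)$ is meager in $Y$. To obtain the stated structure I invoke iii): writing $\Sigma=\bigcup_s(\Sigma\cap C_s)$, each $\Sigma\cap C_s$ is closed in $C_s$, and since $\pi|_{C_s}$ is proper with metrizable target $Y$ it is a closed map, so each $\pi(\Sigma\cap C_s)$ is closed in $Y$. Being a closed subset of the meager set $\pi(\Sigma)$ in the Baire space $V$, each $\pi(\Sigma\cap C_s)$ has empty interior; hence $V\setminus\Theta=\bigcup_s\pi(\Sigma\cap C_s)$ is a countable union of closed sets without interior points, i.e.\ $\Theta$ is residual. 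I expect the main obstacle to be the Fredholm-projection lemma of the second step -- the bookkeeping identifying the kernel and cokernel of $d(\pi|_M)$ with those of $D_xF$ and computing the index; a secondary point needing care is the passage from the ``meager'' output of Sard--Smale to the closed nowhere-dense pieces, which is precisely where the $\sigma$-properness iii) is used.
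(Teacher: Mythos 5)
The paper itself offers no proof of Theorem~\ref{tran}: it is quoted as a known tool from \cite{Q}, \cite{ST}, \cite{U}, so the only meaningful comparison is with the standard proof in those references. Your first two steps are correct and are exactly that standard route: the decomposition $X=\ker A\oplus X_1$, $Z=\mathrm{im}\,A\oplus Z_1$, the surjectivity of $QB$, the identification of $\ker F'(x_0,y_0)$ as a complemented subspace, the submanifold structure of $M=F^{-1}(0)$, and the bookkeeping showing that $\ker d(\pi|_M)=\ker A\times\{0\}$ and $\mathrm{im}\,d(\pi|_M)=\ker(QB)$, hence that $\pi|_M$ is Fredholm of index $l$ and that $\Theta$ coincides with the regular values of $\pi|_M$. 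The final Baire-category manipulation is also fine as far as it goes.

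The genuine gap is the sentence ``by the Sard--Smale theorem applied to $\pi|_M$ \dots\ the set of critical values $\pi(\Sigma)$ is meager in $Y$.'' Smale's theorem does not hold for an arbitrary Fredholm map between Banach manifolds: its proof covers the source by \emph{countably many} charts in which the map has the local representative $(e,x)\mapsto(e,\varphi(e,x))$ and applies finite-dimensional Sard in each, so it requires the source to be second countable (Lindel\"of) or, in Quinn's formulation, the map to be $\sigma$-proper. Here $Y$ is an arbitrary Banach space --- in the application of this paper $Y=\mathfrak{E}^k$ is non-separable --- so $M\subset U\times V$ has no reason to be Lindel\"of, and without some countability hypothesis the conclusion you invoke is simply false (an uncountable discrete $0$-dimensional manifold mapping onto an interval of $\mathbb{R}$ is a smooth Fredholm map of index $0$ whose critical values fill the interval). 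Thus hypothesis iii) cannot be relegated to a final upgrade from ``meager'' to ``countable union of closed nowhere dense sets'': it is precisely the substitute for second countability and must enter the Sard argument itself. The correct order is: fix $s$ and a closed ball $\bar B\subset V$; by properness $K:=(\pi|_{C_s})^{-1}(\bar B)$ is compact, so it is covered by finitely many local-representative charts; finite-dimensional Sard in each chart (this is where one needs $\alpha\ge l+1$, i.e.\ the hypothesis $\alpha>\max(l,0)$ of \cite{Q}, \cite{ST} --- the ``$l\le\alpha$'' in the statement is a slight misquote, harmless here since $l=0$, $\alpha=1$) shows that $\pi(\Sigma\cap K)$ has empty interior in $\bar B$, while properness again makes $\pi(\Sigma\cap C_s)$ closed; together these give that each $\pi(\Sigma\cap C_s)$ is closed with empty interior, and then your concluding paragraph applies verbatim. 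So the architecture of your argument is right, but the appeal to Sard--Smale for $\pi|_M$ on all of $M$ is unjustified, and the role of $\sigma$-properness is misplaced.
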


\medskip

\begin{proof}[Proof of the main result]

We are going to apply the transversality theorem \ref{tran} to the map $F$ defined by \eqref{10}. In this case we have
$X=Z=\rr^{N},$ $Y=\mathfrak{E}^k,$ $U=\Omega\subset\rr^{N }$ and $V=\mathfrak{B}_\rho\subset \mathfrak{E}^k,$
where   $\rho$ is small enough. Since $X=Z$ is a finite dimensional space, it is easy to check that for any $\theta\in \mathfrak{B}_\rho,$ the map $x\to F(x,\theta)$ is a Fredholm map of index $0$ and then assumption i) holds.
  As far as it concerns assumption  iii), we have that
$$F^{-1}(0)= \cup_{s=1}^{+\infty} C_s,\ \hbox{where}\ C_s:=\left\{ {\Omega_s}\times \overline  {\mathfrak{B}  _{\rho-{1\over s}}} \right\}\cap F^{-1}(0) \ \hbox{and}\
 \Omega_s:=\left\{x\in\Omega\ :\ \mathrm{dist}(x,\partial\Omega)\le 1/s\right\}.$$
Using the compactness of $\Omega_s,$ we can show  that the restriction
$\pi\circ i_{|_{C_s}}$ is proper, namely if the sequence $(\theta_n)\subset \overline  {\mathfrak{B}  _{\rho-{1\over s}}} $ converges to $\psi_0$ and the sequence $(x_n)\subset \Omega_s$ is such that
$F(x_n,\theta_n)=0$ then there exists a subsequence of $(x_n)$ which converges to $x_0\in \Omega_s $ and $F(x_0,\psi_0)=0.$

To prove that assumption ii) holds we will show in Lemma \ref{lemma42} that if  $(\bar x,\bar \theta)\in\Omega\times\mathfrak{B}_\rho$ is such that $F(\bar x,\bar \theta)=\nabla_x\widetilde v^{\bar\theta}_{\xi}(\bar x)|_{\xi=\bar x}=0$ the map $F'_\theta(\bar x,\bar \theta):\mathfrak{E}^k\to\rr^N$  defined by
 $\theta\to D_\theta  \nabla_x\widetilde v^\theta_{\bar x}(  x)|_{\theta=\bar\theta, x=\bar x}[\theta] $ is surjective.

Finally, we can apply the transversality theorem \ref{tran} and we get that
the set
\begin{align*}
 \mathfrak{A}:=&\left\{\theta\in \mathfrak{B}_\rho \ :\ F'_x(x,\theta):\rr^{N }\to \rr^{N } \ \hbox{is invertible at any point}\ (x,\theta)\  \hbox{such that}\ F(x,\theta )=0\right\}\nonumber\\
=&\left\{\theta\in \mathfrak{B}_\rho \ :\  \hbox{the critical points of the Robin function}\ \hbox{ of the domain $ \Omega_\theta$ are nondegenerate}
  \right\}\end{align*}
is a residual, and hence dense, subset of $\mathfrak{B}_\rho.$

\end{proof}

\section{$0$ is a regular value of $F$}\label{sec4}
In this section we show that $0$ is a regular value of the map  $F$ defined by \eqref{10}.
\begin{lemma}\label{lemma42}
The map $\theta\to F'_\theta(\bar x,\bar\theta)[\theta]$ is onto on $\rr^N$ for any $(\bar x,\bar\theta)\in \Omega\times \mathfrak{B}_\rho$ such that $F(\bar x,\bar\theta)=0.$
\end{lemma}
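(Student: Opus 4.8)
The plan is to compute the derivative $F'_\theta(\bar x,\bar\theta)[\theta]$ explicitly and to show that by a suitable choice of the deformation field $\theta$ one can produce any prescribed vector in $\rr^N$. Without loss of generality I would first reduce to the case $\bar\theta=0$: since $\Omega_{\bar\theta}=(I+\bar\theta)\Omega$ is again a smooth bounded domain and the construction of $\widetilde v^\theta_\xi$ is natural with respect to composition of diffeomorphisms, differentiating at $\bar\theta$ in the direction $\theta$ amounts to differentiating at $0$ on the domain $\Omega_{\bar\theta}$ in a correspondingly transported direction. So assume $\bar\theta=0$, write $\bar x$ for the critical point, and let $u:=\widetilde v^0_{\bar x}$, which by Remark \ref{pag3} lies in $C^{3,\alpha}(\overline\Omega)$ since $k\ge4$; the hypothesis $F(\bar x,0)=0$ says $\nabla_x u(\bar x)=0$ (after the $\xi=\bar x$ substitution, keeping in mind the chain-rule contributions of the $\xi$-dependence on $\partial\Omega$).

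Next I would identify the linearized equation. Differentiating problem \eqref{6} with respect to $\theta$ at $\theta=0$ in the direction $\theta$, and using \eqref{star2} (so that $\gamma'(z) = -\theta'(z)+O(\|\theta\|^2)$, $\gamma''=O(\|\theta\|)$), one gets that the derivative $\dot v:=D_\theta\widetilde v^\theta_{\bar x}|_{\theta=0}[\theta]$ solves a problem of the form
\begin{equation*}
\left\{\begin{aligned}
&\Delta_x \dot v = \mathcal{L}(\theta)\quad &\hbox{in }\ \Omega,\\
&\dot v(x) = \Gamma'\(|x-\bar x|\)\,\frac{(x-\bar x)\cdot(\theta(x)-\theta(\bar x))}{|x-\bar x|}\quad &\hbox{on }\ \partial\Omega,
\end{aligned}\right.
\end{equation*}
where $\mathcal{L}(\theta)$ is a second-order expression in $u$ linear in $\theta'$ and $\theta''$ (readable off from \eqref{6}). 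Then $F'_\theta(\bar x,0)[\theta]=\nabla_x\dot v(\bar x) + (\hbox{terms from }\xi\hbox{-dependence})$. The clean way to extract $\nabla_x\dot v(\bar x)$ is to pair $\dot v$ against the functions $w^0_p$ from \eqref{7bis}: since $w^0_p$ is harmonic and equals $(x_p-\bar x_p)/|x-\bar x|^N$ on $\partial\Omega$, Green's identity gives $\partial_{x_p}\dot v(\bar x)$ as a boundary integral of $\dot v$ against $\partial_\nu w^0_p$ plus an interior integral of $\mathcal{L}(\theta)\,w^0_p$ — in both of which $\theta$ enters linearly. This realizes $F'_\theta(\bar x,0)$ as an explicit linear functional of $\theta$ on $\partial\Omega$ (and a neighborhood), and what must be shown is that its range is all of $\rr^N$, i.e. that the $N$ linear functionals $\theta\mapsto (F'_\theta(\bar x,0)[\theta])_p$, $p=1,\dots,N$, are linearly independent.

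To prove independence I would argue by contradiction: if some nonzero $a\in\rr^N$ annihilates the range, then $\sum_p a_p\,(F'_\theta[\theta])_p=0$ for all $\theta\in\mathfrak{E}^k$. Because $\theta$ may be chosen to be an arbitrary $C^k$ field supported near any single boundary point $x_0\in\partial\Omega$, and in particular with $\theta(\bar x)=0$ so the $\theta(\bar x)$-terms drop, locality forces the integrand — a specific combination of $u$, its derivatives, $\Gamma$, and the $w^0_p$, all evaluated near $x_0$ — to vanish identically on $\partial\Omega$. The key point is that the dominant boundary contribution carries the explicit, nonvanishing weight $\Gamma'(|x-\bar x|)\,\partial_\nu w^0_p$, which is (up to sign and a positive factor) $\sim |x-\bar x|^{1-N}\,\partial_\nu\big((x_p-\bar x_p)|x-\bar x|^{-N}\big)$; testing against normal perturbations $\theta(x)=\phi(x)\nu(x)$ with $\phi\ge0$ of small support shows $\sum_p a_p\,g_p(x_0)=0$ for every $x_0\in\partial\Omega$, where $g_p(x_0)$ is an explicit nowhere-degenerate vector field on $\partial\Omega$ (essentially the harmonic extension data combined with $\Gamma'$), whence $a=0$. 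I expect this last step — extracting from the vanishing of a single scalar integral over $\partial\Omega$, for all admissible $\theta$, the pointwise vanishing of the relevant coefficient and then the conclusion $a=0$ — to be the main obstacle, since it is exactly where the ``new delicate estimates'' alluded to in the introduction are needed: one must control the interior term $\int_\Omega \mathcal L(\theta)\,w^0_p$ and show it cannot conspire to cancel the boundary term for all localized $\theta$, which uses the $C^{3,\alpha}$ regularity of $u$ and the precise singular structure of $w^0_p$ near $\bar x$ versus near $x_0\ne\bar x$.
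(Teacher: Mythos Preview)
Your reduction to $\bar\theta=0$, the identification of the linearized problem for $\dot v=D_\theta\widetilde v^\theta_{\bar x}|_{\theta=0}[\theta]$, and the idea of expressing $\nabla\dot v(\bar x)$ via a Green-type representation all match the paper. The divergence is at the decisive step. The paper does \emph{not} argue by contradiction and localization near a single boundary point; it is \emph{constructive}. For each $q$ it takes
\[
\alpha^{(q)}_q(\eta)=|\eta-\bar\eta|^{N}\,\chi\bigl([\mathrm{dist}(\eta,\partial\Omega_{\bar\theta})]^{a}\bigr),\qquad \alpha^{(q)}_i=0\ (i\neq q),
\]
with $\chi$ a cutoff equal to $1$ near $\partial\Omega_{\bar\theta}$ and vanishing at distance $\ge2\bar\rho$. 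The factor $|\eta-\bar\eta|^{N}$ is the whole trick: on the boundary it cancels the denominator in the datum $\sum_i(\eta_i-\bar\eta_i)|\eta-\bar\eta|^{-N}\alpha_i(\eta)$, leaving $\eta_q-\bar\eta_q$, whose $\partial_{\eta_p}$ is $\delta_{pq}$. The boundary integral in the Green representation then collapses to $\delta_{pq}\int_{\partial\Omega_{\bar\theta}}\partial_\nu G(\cdot,\bar\eta)\,d\sigma=:\delta_{pq}\sigma_0\neq0$. The interior integral lives in the thin layer $\{\mathrm{dist}<2\bar\rho\}$; there the auxiliary estimate $|G(\eta,\bar\eta)|\le c\bar\rho$ (Lemma~\ref{lemma41}) together with the choice $a\ge4$ (which keeps all needed derivatives of $\chi(d^a)$ uniformly bounded) forces it to $o(1)$ as $\bar\rho\to0$. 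Hence $(\partial_{\eta_p}w[\alpha^{(q)}](\bar\eta))_{p,q}=\sigma_0 I+o(1)$, invertible for $\bar\rho$ small.

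Your proposed route has a real gap at exactly the point you flag. If $\theta$ is a bump of width $\epsilon$ at a single point $x_0\in\partial\Omega$, then the boundary contribution in the representation of $\partial_p\dot v(\bar x)$ carries one derivative of $\theta$ over a patch of measure $\epsilon^{N-1}$, so is $O(\epsilon^{N-2})$; but the interior contribution carries $\partial_p\mathcal L(\theta)$, hence three derivatives of $\theta$, over a region of measure $\epsilon^{N}$, so is $O(\epsilon^{N-3})$ and dominates. You therefore cannot pass to a pointwise identity on $\partial\Omega$ by shrinking $\epsilon$; at best, after integrating by parts (using $G=0$ on $\partial\Omega$) the interior term becomes $O(\epsilon^{N-2})$, i.e.\ \emph{comparable} to the boundary term, and ruling out cancellation for all such $\theta$ would require a separate argument you have not supplied. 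The paper avoids this entirely by shrinking the \emph{thickness} $\bar\rho$ of a full boundary layer rather than the diameter of a patch, so that the smallness comes from $G=O(\bar\rho)$ rather than from the support of $\theta$. A secondary issue: pairing $\dot v$ with the harmonic functions $w^0_p$ via Green's second identity does not produce $\partial_{x_p}\dot v(\bar x)$; you need the Green's function $G(\cdot,\bar x)$ applied to $\partial_p\dot v$ (as in the paper's \eqref{4.7}), since $w^0_p$ equals $\partial_{x_p}H_{\bar x}$, not a derivative of $G$ in the pole variable.
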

\begin{proof}
Let us fix $(\bar x,\bar\theta)\in \Omega\times \mathfrak{B}_\rho$ such that $F(\bar x,\bar\theta)=0.$
We want to show that given $e^{(1)},\dots,e^{(N)}$ the canonical base in $\rr^N$, for any $i=1,\dots,N$ there exists $\theta\in\mathfrak{E}^k$ such that $F'_\theta(\bar x,\bar\theta)[\theta]=e^{(i)}.$
We point out that the ontoness of the map $\theta\to F'_\theta(\bar x,\bar\theta)[\theta]$ is invariant with respect
to the change of variables $\eta=(I+\bar\theta)(x).$
We have that
\begin{equation}\label{4.1}
F'_\theta(\bar x,\bar\theta)[\theta]=\({\partial\over\partial x_1}D_\theta\widetilde v^{\bar\theta}_{\bar x}[\theta](\bar x),\dots,{\partial\over\partial x_N}D_\theta\widetilde v^{\bar\theta}_{\bar x}[\theta](\bar x)\)
\end{equation}
because  ${\partial\over\partial x_p}D_\theta\widetilde v^{\bar\theta}_{\bar x}[\theta](\bar x)=D_\theta{\partial\over\partial x_p}\widetilde v^{\bar\theta}_{\bar x}[\theta](\bar x) $ as it is easy to verify.

Let $\bar\eta=\bar x+\bar\theta(\bar x)\in\Omega_{\bar \theta}.$ By \eqref{4}, \eqref{5}, \eqref{6} and Lemma \ref{lemma2}, we deduce that
$v_{\bar\eta}^{\bar\theta}(\eta)=v_{\bar\eta}^{\bar\theta}(x+\bar\theta (x))=\widetilde v^{\bar\theta}_{\bar x}(x)$
is the unique solution of
$$\left\{\begin{aligned}
 &\Delta_\eta  v_{\bar\eta}^{\bar\theta}
 =0 \qquad \hbox{if}\ \eta\in\Omega_{\bar \theta} \\
  &v_{\bar\eta}^{\bar\theta}(\eta)= \Gamma\( |\eta -\bar\eta|\)
   \qquad \hbox{if}\ \eta\in\partial\Omega _{\bar \theta}.\\
\end{aligned}\right.
$$
and
$\nabla_\eta v_{\bar\eta}^{\bar\theta}(\eta)|_{\eta=\bar\eta}=0.$

We consider the deformation $I+\bar\theta+\theta=(I+\alpha)(I+\bar\theta),$ where $\theta=\alpha(I+\bar\theta)$
and the domain $\(I+\bar\theta+\theta\)\Omega=(I+\alpha)(I+\bar\theta)\Omega.$
We set
\begin{equation}\label{4.2}
 \bar\eta:=(I+\bar\theta)\bar x\quad\hbox{and}\quad \bar z:=(I+\alpha)\bar\eta.
 \end{equation}
Let $v_{\bar z}^{\bar \theta+\theta}$ be the unique solution of
\begin{equation}\label{4.3}
\left\{\begin{aligned}
 &\Delta_z  w(z)
 =0 \qquad \hbox{if}\ z\in\Omega_{\bar \theta+\theta} \\
  &w(z)= \Gamma\( |z -\bar z|\)
   \qquad \hbox{if}\ z\in\partial\Omega _{\bar \theta+\theta}.\\
\end{aligned}\right.
\end{equation}
 Then we set
 $$v_{\bar z}^{\bar \theta+\theta}(z)=v_{\bar z}^{\bar \theta+\theta}\(\eta+\alpha(\eta)\)=
\hat v_{\bar \eta}^{\bar \theta+\theta}(\eta)=\hat v_{\bar \eta}^{\bar \theta+\theta}\(x+\bar \theta(x)\)=
\widetilde v_{\bar x}^{\bar\theta+\theta}(x).$$
We immediately obtain that
\begin{equation}\label{4.4}
D_\theta\hat v_{\bar \eta}^{\bar \theta+\theta}|_{\theta=0}[\beta](\eta)=D_\theta\widetilde v_{\bar x}^{\bar\theta+\theta} |_{\theta=0}[\beta](x)\ \hbox{with}\ \eta=x+\bar\theta(x).
\end{equation}

By \eqref{4.4} we have that given $\theta^{(1)},\dots,\theta^{(N)}$ the $N$ vectors
$$\nabla_x D_\theta\widetilde v_{\bar x}^{\bar\theta+\theta} |_{\theta=0}[\theta^{(1)}](\bar x),\dots, \nabla_x D_\theta\widetilde v_{\bar x}^{\bar\theta+\theta} |_{\theta=0}[\theta^{(N)}](\bar x)$$
are linearly independent if and only if the $N$ vectors
$$\nabla_x D_\theta\hat v_{\bar \eta}^{\bar \theta+\theta}|_{\theta=0}[\theta^{(1)}](\bar \eta),\dots, \nabla_x D_\theta\hat v_{\bar \eta}^{\bar \theta+\theta} |_{\theta=0}[\theta^{(N)}](\bar \eta) $$
are linearly independent.

At this stage our aim is to find $\theta^{(1)},\dots,\theta^{(N)}$ so that the $N$ vectors
$$\nabla_x D_\theta\hat v_{\bar \eta}^{\bar \theta+\theta}|_{\theta=0}[\theta^{(1)}](\bar \eta),\dots, \nabla_x D_\theta\hat v_{\bar \eta}^{\bar \theta+\theta} |_{\theta=0}[\theta^{(N)}](\bar \eta) $$
are linearly independent. First of all we point out that by Lemma \ref{p21} the function
$w_{\bar\eta}^{\bar\theta}[\alpha](\cdot):=D_\theta\hat v_{\bar \eta}^{\bar \theta+\theta}|_{\theta=0}[\theta](\cdot)$ is the unique solution of the
problem
\begin{equation}\label{4.5}
\left\{\begin{aligned}
 & \Delta_\eta  w-\sum\limits_{i,j=1}^N{\partial^2 \hat v_{\bar\eta}^{\bar\theta}\over
 \partial\eta_i\partial\eta_j}(\eta)\[{\partial \alpha_j\over\partial\eta_i}(\eta)+{\partial \alpha_i\over\partial\eta_j}(\eta)
 \]-\sum\limits_{j=1}^N{\partial  \hat v_{\bar\eta}^{\bar\theta}\over
 \partial\eta_j}(\eta)\Delta_\eta \alpha_j (\eta)=0 \ &\hbox{if}\ \eta\in\Omega _{\bar\theta}\\
  &w(\eta)=\sum\limits_{i=1}^N {\eta_i-\bar\eta_i\over |\eta-\bar\eta|^N}\(\alpha_i(\eta)-\alpha_i(\bar\eta)\) \ &\hbox{if}\ \eta\in\partial\Omega _{\bar\theta}.\\
\end{aligned}\right.
\end{equation}
Here $\alpha=\theta(I+\bar\theta)^{-1}$ and $\hat v_{\bar\eta}^{\bar\theta}$ is the unique solution of
$$\left\{\begin{aligned}
 & \Delta_\eta  \hat v_{\bar\eta}^{\bar\theta}=0 \ &\hbox{if}\ \eta\in\Omega _{\bar\theta}\\
  &\hat v_{\bar\eta}^{\bar\theta}(\eta)=\Gamma\(|\eta -\bar\eta|\) \ &\hbox{if}\ \eta\in\partial\Omega _{\bar\theta}.\\
\end{aligned}\right.
$$
We remark that by standard regularity theory (see also Remark \ref{pag3}) it follows
\begin{equation}\label{4.5bis}
 \| \hat v_{\bar\eta}^{\bar\theta}\|_{C^{3}(\overline \Omega _{\bar\theta} )}\le c(\bar\theta,\bar\eta),
\end{equation}
for some positive constant depending only on $\bar \theta$ and $\bar\eta.$

Moreover, we also get that  the function $\eta\to{\partial\over\partial\eta_p}D_\theta\hat v_{\bar\eta}^{\bar\theta+\theta}|_{\theta=0}[\theta](\eta)={\partial\over\partial\eta_p}w_{\bar\eta}^{\bar\theta}[\alpha](\eta)$ for $p=1,\dots,N$ is the unique solution
of the problem
\begin{equation}\label{4.6}
\left\{\begin{aligned}
 & \Delta_\eta  {\partial\over\partial\eta_p} w_{\bar\eta}^{\bar\theta}[\alpha](\eta)-{\partial\over\partial\eta_p}\left\{\sum\limits_{i,j=1}^N{\partial^2 \hat v_{\bar\eta}^{\bar\theta}\over
 \partial\eta_i\partial\eta_j}(\eta)\[{\partial \alpha_j\over\partial\eta_i}(\eta)+{\partial \alpha_i\over\partial\eta_j}(\eta)
 \]\right\}  -{\partial\over\partial\eta_p}\left\{\sum\limits_{j=1}^N{\partial  \hat v_{\bar\eta}^{\bar\theta}\over
 \partial\eta_j}(\eta)\Delta_\eta \alpha_j (\eta)\right\}=0 \  \hbox{if}\ \eta\in\Omega _{\bar\theta}\\
  &{\partial\over\partial\eta_p} w_{\bar\eta}^{\bar\theta}[\alpha](\eta)={\partial\over\partial\eta_p}\left\{\sum\limits_{i=1}^N {\eta_i-\bar\eta_i\over |\eta-\bar\eta|^N}\(\alpha_i(\eta)-\alpha_i(\bar\eta)\) \right\}\  \hbox{if}\ \eta\in\partial\Omega _{\bar\theta}.\\
\end{aligned}\right.
\end{equation}
Therefore we look for   $\alpha^{(1)},\dots,\alpha^{(N)}$ such that the $N$ vectors
$$\nabla_\eta w^{\bar\theta}_{\bar\eta}[\alpha^{(1)}](\bar\eta),\dots,\nabla_\eta w^{\bar\theta}_{\bar\eta}[\alpha^{(N)}](\bar\eta)$$
are linearly independent.
Using the Green's representation formula by \eqref{4.6} we get
\begin{align}\label{4.7}
& {\partial\over\partial\eta_p} w_{\bar\eta}^{\bar\theta}[\alpha](\bar\eta)= \int\limits_{\partial\Omega_{\bar\theta}}{\partial\over\partial\eta_p}\left\{\sum\limits_{i=1}^N {\eta_i-\bar\eta_i\over |\eta-\bar\eta|^N}\(\alpha_i(\eta)-\alpha_i(\bar\eta)\) \right\}{\partial G\over\partial\nu}(\eta,\bar\eta)d\sigma\nonumber \\
 &+\int\limits_{ \Omega_{\bar\theta}} {\partial\over\partial\eta_p}\left\{\sum\limits_{i,j=1}^N{\partial^2 \hat v_{\bar\eta}^{\bar\theta}\over
 \partial\eta_i\partial\eta_j}(\eta)\[{\partial \alpha_j\over\partial\eta_i}(\eta)+{\partial \alpha_i\over\partial\eta_j}(\eta)
 \]- \sum\limits_{j=1}^N{\partial  \hat v_{\bar\eta}^{\bar\theta}\over
 \partial\eta_j}(\eta)\Delta_\eta \alpha_j (\eta)\right\}G(\eta,\bar\eta)d\eta.
 \end{align}
We now choose $\alpha^{(1)}$ so that
 $$\alpha^{(1)}_1( \eta)=|\eta-\bar\eta|^N\chi\(\[\mathrm{dist}(\eta,\partial\Omega_{\bar\theta})\]^a\)\ \hbox{and}\ \alpha^{(1)}_2( \eta)=\dots=\alpha^{(1)}_N( \eta)=0.$$
Since $\partial\Omega_{\bar\theta}$ is smooth, the function $\eta\to \mathrm{dist}(\eta,\partial\Omega_{\bar\theta})$ is of class $C^3$ when $\eta$ is close enough
to the boundary.
 Here  the cut off function $\chi$ is of class $C^3$ and satisfies
\begin{equation}\label{4.8}\chi(s)=1\ \hbox{if}\ s\in(0,\bar\rho),\ \chi(s)=0\ \hbox{if}\ s\in(2\bar\rho,\infty),\ |\chi'(s)|\le {1\over\bar\rho},\ |\chi''(s)|\le {1\over\bar\rho^2},\
 |\chi'''(s)|\le {1\over\bar\rho^3}\end{equation}
 where $\bar\rho>0$ is such that $4\bar \rho\le \mathrm{dist}(\bar\eta,\partial\Omega_{\bar\theta})$ and $\bar\rho$ will be chosen  small enough.
 The positive number $a $ will be chosen $a\ge 4 $ (so that estimate \eqref{4.16} holds).

By the definition of $\alpha^{(1)}$ and \eqref{4.7} we have
\begin{align}\label{4.9}
&  \int\limits_{\partial\Omega_{\bar\theta}}{\partial\over\partial\eta_p}\left\{\sum\limits_{i=1}^N {\eta_i-\bar\eta_i\over |\eta-\bar\eta|^N}\(\alpha_i(\eta)-\alpha_i(\bar\eta)\) \right\}{\partial G\over\partial\nu}(\eta,\bar\eta)d\sigma =\int\limits_{\partial\Omega_{\bar\theta}}{\partial\over\partial\eta_p}  (\eta_1-\bar\eta_1){\partial G\over\partial\nu}(\eta,\bar\eta)d\sigma=\delta_{1p}\int\limits_{\partial\Omega_{\bar\theta}} {\partial G\over\partial\nu}(\eta,\bar\eta)d\sigma \end{align}
Moreover we have
\begin{align}\label{4.10}
&\int\limits_{ \Omega_{\bar\theta}} {\partial\over\partial\eta_p}\left\{\sum\limits_{i,j=1}^N{\partial^2 \hat v_{\bar\eta}^{\bar\theta}\over
 \partial\eta_i\partial\eta_j}(\eta)\[{\partial \alpha_j\over\partial\eta_i}(\eta)+{\partial \alpha_i\over\partial\eta_j}(\eta)
 \]- \sum\limits_{j=1}^N{\partial  \hat v_{\bar\eta}^{\bar\theta}\over
 \partial\eta_j}(\eta)\Delta_\eta \alpha_j (\eta)\right\}G(\eta,\bar\eta)d\eta\nonumber\\ &
=\int\limits_{ \Omega_{\bar\theta}^{\bar\rho}} {\partial\over\partial\eta_p}\left\{2{\partial^2 \hat v_{\bar\eta}^{\bar\theta}\over
 \partial\eta_1^2}(\eta) {\partial \alpha_1^{(1)}\over\partial\eta_1}(\eta) -  {\partial  \hat v_{\bar\eta}^{\bar\theta}\over
 \partial\eta_1}(\eta)\Delta_\eta \alpha_1^{(1)} (\eta)\right\}G(\eta,\bar\eta)d\eta =:\sigma^{(1)}_p(\bar\rho),
 \end{align}
where
 $\Omega_{\bar\theta}^{\bar\rho}:=\left\{\eta \in\Omega_{\bar\theta}\ :\ \mathrm{dist}( \eta,\partial\Omega_{\bar\theta})<2\bar\rho\right\}.$

We now establish an accurate estimate of $\sigma^{(1)}(\bar\rho).$
 By Lemma \ref{lemma41}, proved at the end of this section, for $\bar\rho$ small enough we have that there exists $c_1>0$ such that
\begin{equation}\label{4.11}
|G(\eta,\bar\eta)|\le c_1\bar\rho\ \hbox{for any}\ \eta\in\Omega_{\bar\theta}^{\bar\rho}.
\end{equation}

Moreover, it is easy to check that there exists $c_2>0$ such that for any $t=(t_1,\dots,t_N)$ with $|t|\le3$
\begin{equation}\label{4.13}
\left|{\partial ^t |\eta-\bar\eta|^N\over\partial \eta_1^{t_1}\cdots\partial\eta_N^{t_N}}\right|\le \left\{c_2 \ \hbox{if}\ N\ge3,\   c_2\bar\rho^{-1}\ \hbox{if}\ N=2\right\}\quad \hbox{for any}\ \eta\in\Omega_{\bar\theta}^{\bar\rho}.
\end{equation}

By \eqref{4.10}, \eqref{4.11}, \eqref{4.13} and \eqref{4.5bis} it follows that
\begin{align}\label{4.14}
&\sigma^{(1)}_p(\bar\rho)
 \le c\int\limits_{ \Omega_{\bar\theta}^{\bar\rho}} \left\{ \left| {\partial \alpha_1^{(1)}\over\partial\eta_1}\right|
 +\left| {\partial ^2\alpha_1^{(1)}\over\partial\eta_1\partial\eta_p}\right|+\left| \Delta_\eta \alpha_1^{(1)} \right|+
 \left| {\partial  \over \partial\eta_p}\Delta_\eta \alpha_1^{(1)}\right|
 \right\}d\eta\nonumber\\ &
\le c\int\limits_{ \Omega_{\bar\theta}^{\bar\rho}} \underbrace{\left\{ \left| {\partial  \over\partial\eta_1}\chi\(d^a(\eta)\)\right|
 +\left| {\partial ^2 \over\partial\eta_1\partial\eta_p}\chi\(d^a(\eta)\)\right|+\left| \Delta_\eta \chi\(d^a(\eta)\) \right|+
 \left| {\partial  \over \partial\eta_p}\Delta_\eta \chi\(d^a(\eta)\)\right|
 \right\}}_{A_p(\eta)}d\eta
 \end{align}
where $d^a(\eta):=\[\mathrm{dist}(\eta,\partial\Omega_{\bar\theta})\]^a.$

Let us estimate $A_p(\eta)$ when $\eta\in \Omega_{\bar\theta}^{\bar\rho}.$
We recall that $0\le d(\eta)\le\bar\rho$ since $\eta\in\Omega_{\bar\theta}^{\bar\rho}$ and \eqref{4.8} holds.
By a  simple  calculation of the derivatives of the function $\eta\to\chi\(d^a(\eta)\) $ we easily get   that there exists $c_3>0$ such that
\begin{equation}\label{4.15}
0\le A_p(\eta)\le c_3\(\bar\rho^{a-2}+\bar\rho^{a-3}+\bar\rho^{a-4}+\bar\rho^{2a-4} +\bar\rho^{2a-5}+\bar\rho^{3a-6}\)\ \hbox{for any}\ \eta\in \Omega_{\bar\theta}^{\bar\rho}.
\end{equation}
Then choosing $a\ge4$ we have that there exists $c_4>0$ such that
\begin{equation}\label{4.16}
0\le A_p(\eta)\le c_2 \ \hbox{for any}\ \eta\in \Omega_{\bar\theta}^{\bar\rho}.
\end{equation}

By \eqref{4.10}, \eqref{4.11}, \eqref{4.14} and \eqref{4.15} we deduce that $\lim\limits_{\bar\rho\to0}\sigma^{(1)}_p(\bar\rho)=0.$
Therefore, by \eqref{4.7}, \eqref{4.9}  and \eqref{4.10} we get
$$\nabla_\eta D_\theta\hat v_{\bar\eta}^{\bar\theta+\theta}|_{\theta=0}[\alpha ^{(1)}](\bar\eta)=\nabla_\eta w_{\bar\eta}^{\bar\theta } [\alpha ^{(1)}](\bar\eta)
=\(\sigma_0+\sigma^{(1)}_1(\bar\rho),\sigma^{(1)}_2(\bar\rho),\dots,\sigma^{(1)}_N(\bar\rho)\),$$
where $\sigma_0:=\int\limits_{\partial\Omega_{\bar\theta} }{\partial G\over\partial\nu}(\eta,\bar\eta)d\sigma\not=0.$

In a similar way, for any $q=1,\dots,N$ we can choose $\alpha ^{(q)}$ such that
 $$\alpha^{(q)}_q( \eta)=|\eta-\bar\eta|^N\chi\(\[\mathrm{dist}(\eta,\partial\Omega_{\bar\theta})\]^a\)\quad \hbox{and}\quad \alpha^{(q)}_i( \eta)=0\ \hbox{if}\ i\not=q.$$

Arguing as above, for any $q=1,\dots,N$ we get
$$\nabla_\eta D_\theta\hat v_{\bar\eta}^{\bar\theta+\theta}|_{\theta=0}[\alpha ^{(q)}](\bar\eta)=\nabla_\eta w_{\bar\eta}^{\bar\theta } [\alpha ^{(q)}](\bar\eta)
=\(\sigma^{(q)}_1(\bar\rho),\dots,\underbrace{\sigma_0+\sigma^{(q)}_q(\bar\rho)}_{q-\mathrm{th}},\dots,\sigma^{(q)}_N(\bar\rho)\),$$
where $\lim\limits_{\bar\rho\to0}\sigma^{(q)}_p(\bar\rho)=0 $ for any $p=1,\dots,N.$

Finally, we choose $\bar\rho$ small enough so that the $N$ vectors $\nabla_\eta w_{\bar\eta}^{\bar\theta } [\alpha ^{(1)}](\bar\eta),\dots,ù\nabla_\eta w_{\bar\eta}^{\bar\theta } [\alpha ^{(N)}](\bar\eta)$ are linearly independent and the claim follows.
\end{proof}

Next, we prove Lemma \ref{lemma41} used in the proof of Lemma \ref{lemma42}.
\begin{lemma}\label{lemma41}
Given $y\in\Omega,$ there exist $\tau_0>0$ and $c_1>0$ such that for any $\tau\in(0,\tau_0)$
$$|G(x,y)|\le c_1 \tau\quad \forall\ x\in\Omega_\tau:=\left\{x\in\Omega\ :\ \mathrm{dist}(x,\partial\Omega)\le\tau\right\}.$$
\end{lemma}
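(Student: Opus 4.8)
The plan is to bound the Green's function $G(x,y)$ of $\Omega$ near the boundary by exploiting the fact that $G(\cdot,y)$ vanishes on $\partial\Omega$ together with the decomposition $G_y = \frac{1}{\omega_N}(\Gamma_y - H_y)$. Fix $y\in\Omega$ once and for all, and let $d_0 := \mathrm{dist}(y,\partial\Omega)>0$. For $\tau$ small (say $\tau < d_0/2$) the points of $\Omega_\tau$ stay uniformly away from $y$, so $\Gamma(|x-y|)$ and all its derivatives are bounded on $\Omega_\tau$; hence $H_y$ inherits from \eqref{Gamma2} the bound $\|H_y\|_{C^1(\overline\Omega)}\le C(y)$ by standard elliptic estimates (the boundary data $\Gamma_y$ restricted to $\partial\Omega$ is smooth and bounded independently of anything). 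Consequently $G(\cdot,y)\in C^1$ in a neighbourhood of $\partial\Omega$ inside $\overline\Omega$, with $\|\nabla_x G(\cdot,y)\|_{L^\infty(\Omega_{\tau_0})}\le C(y)$ for some fixed small $\tau_0$.

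The key step is then elementary: since $G(x,y)=0$ for $x\in\partial\Omega$, for any $x\in\Omega_\tau$ pick the nearest boundary point $\bar x\in\partial\Omega$ with $|x-\bar x|=\mathrm{dist}(x,\partial\Omega)\le\tau$; since $\Omega$ is smooth, for $\tau_0$ small enough the segment $[\bar x, x]$ lies in $\overline\Omega\cap\Omega_{\tau_0}$ (or one connects $x$ to $\bar x$ by a short path in $\overline\Omega$ of length comparable to $\tau$, using the interior cone / uniform exterior ball property of a $C^k$ domain). Integrating the gradient along this path,
\[
|G(x,y)| = |G(x,y)-G(\bar x,y)| \le \|\nabla_x G(\cdot,y)\|_{L^\infty(\Omega_{\tau_0})}\cdot\mathrm{length}([\bar x,x]) \le c_1\tau,
\]
with $c_1 = c_1(y)$. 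This gives exactly the claimed estimate for every $\tau\in(0,\tau_0)$.

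The only genuine subtlety — and the main obstacle — is justifying the $C^1$-up-to-the-boundary regularity of $G(\cdot,y)$ and the quantitative gradient bound near $\partial\Omega$: one must be slightly careful because $G(\cdot,y)$ has a singularity at $y$, but this is harmless since $y$ is a fixed interior point and $\Omega_{\tau_0}$ is disjoint from a neighbourhood of $y$. On $\Omega\setminus B_{d_0/2}(y)$ the function $G(\cdot,y)$ is harmonic with smooth boundary data on $\partial\Omega$, so Schauder estimates (Theorem 6.6 of \cite{GT}, exactly as used in Remark \ref{pag3}) give the required $C^{1,\alpha}$ bound; alternatively, invoke the $C^{2,\alpha}(\overline\Omega)$ regularity of $H_y$ already recorded in Remark \ref{pag2}. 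A secondary technical point is the path-connecting argument: for a $C^k$ ($k\ge4$) domain the boundary satisfies a uniform interior ball condition, so for $\tau_0$ small one may connect $x\in\Omega_\tau$ to its projection $\bar x$ by an arc inside $\overline\Omega$ of length $\le C\tau$, which suffices. No boundary regularity of the domain beyond $C^2$ is actually needed for this lemma.
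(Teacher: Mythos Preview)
Your proof is correct and follows essentially the same approach as the paper: both use the decomposition $G_y=\frac{1}{\omega_N}(\Gamma_y-H_y)$ to obtain a uniform gradient bound for $G(\cdot,y)$ on $\Omega_{\tau_0}$ (since $y$ is a fixed interior point), and then apply the mean value theorem along the segment from $x$ to its nearest boundary point, where $G$ vanishes. Your discussion of why the connecting segment stays in $\overline\Omega$ is in fact more careful than the paper's, which simply asserts that $tx+(1-t)p_x\in\Omega_\tau$.
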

\begin{proof}
Let us fix $y\in\Omega.$
First of all, if $\tau$ is small enough, for any $x\in\Omega_\tau$ there exists a unique $p_x\in\partial \Omega$ such that
\begin{equation}\label{1ss}
\mathrm{dist}(x,\partial\Omega)=|x-p_x|\le\tau.\end{equation}
By mean value theorem we get for some $t\in(0,1)$
$$G(x,y)=G(x,y)-G(p_x,y)=\<\nabla_x G(tx+(1-t)p_x,y),x-p_x\>.$$
Therefore, taking into account that \eqref{1ss} holds and also that $tx+(1-t)p_x\in\Omega_\tau$ for any $ x\in\Omega_\tau,$
we get
$$|G(x,y)|\le \tau\max\limits_{x\in\Omega_\tau}|\nabla_x G(x,y)|.$$
The claim will follow if we prove that
\begin{equation}\label{2ss}
\max\limits_{x\in\Omega_\tau}|\nabla_x G(x,y)|\le c(y),\end{equation}
for some positive constant $c$ depending on $y.$

Let us  recall that (see \eqref{green})
$G(x,y)=\gamma\[\Gamma(|x-y|)-H(x,y)\].$
If we choose $\tau<{\mathrm{dist}(y,\partial\Omega)\over2}$ then
$$|x-y|\ge \mathrm{dist}(y,\partial\Omega)-\mathrm{dist}(x,\partial\Omega)\ge{\mathrm{dist}(y,\partial\Omega)\over2}$$
and so by the expression of $\Gamma$ in \eqref{Gamma} we get
\begin{equation}\label{3ss}
\max\limits_{x\in\Omega_\tau}|\nabla_x \Gamma(x,y)|\le c(y),\end{equation}
for some positive constant $c$ depending on $y.$
 Moreover, by \eqref{Gamma2} and by standard regularity theory (see Remark \ref{pag3}),
we also have that
\begin{equation}\label{4ss}
\max\limits_{x\in\Omega_\tau}|\nabla_x H(x,y)|\le c(y),\end{equation}
for some positive constant $c$ depending on $y.$
Finally, by \eqref{3ss} and \eqref{4ss} and \eqref{green}, we get \eqref{2ss} and so the claim is proved.

\end{proof}

\section{The dependence on $\theta$ of $\widetilde v^\theta_\xi$ and $\nabla_x \widetilde v^\theta_\xi$}\label{sec5}

In the following we calculate the Frechet derivative with respect to $\theta$  of $\widetilde v^\theta_\xi$ and ${\partial\over\partial x_p}\widetilde v^\theta_\xi$ for $p=1,\dots,N.$ Moreover, we prove that the map $\theta\to {\partial\over\partial x_p}\widetilde v^\theta_\xi$ is of class $C^1$ for any $p=1,\dots,N.$

\begin{lemma}\label{p21}
For any $\xi\in\Omega  $  the map $T:\mathfrak{B}_\rho\to C^{2,\alpha}(\overline\Omega)$ defined by $T(\theta)=\widetilde v^\theta_\xi$ is of class $C^1.$ Moreover
$$T'_\theta(0)[\theta]=D_\theta\widetilde v^\theta_\xi|_{\theta=0}[\theta]=u[\theta]$$
is the unique solution of the problem
\begin{equation}\label{1s}
\left\{\begin{aligned}
 & \Delta_x u[\theta]+\sum\limits_{i,j=1}^N{\partial^2 \widetilde v^0_\xi\over
 \partial x_i\partial x_j} \[{\partial \theta_j\over\partial x_i} +{\partial \theta_i\over\partial x_j}
 \]-\sum\limits_{j=1}^N{\partial  \widetilde v^0_\xi\over
 \partial x_j} \Delta_x \theta_j  =0 \ &\hbox{if}\ x\in\Omega  \\
  & u[\theta](x)=-\sum\limits_{i=1}^N {x_i-\xi_i\over |x-\xi|^N}\(\theta_i(x)-\theta_i(\xi)\) \ &\hbox{if}\ x\in\partial\Omega  .\\
\end{aligned}\right.
\end{equation}

\end{lemma}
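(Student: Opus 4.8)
The plan is to realize the map $\theta\mapsto\widetilde v^\theta_\xi$ as an implicitly-defined zero of a $C^1$ operator between Banach spaces and then differentiate. First I would rewrite problem \eqref{6} in the abstract form $\Phi(\theta,v)=0$, where
\[
\Phi:\mathfrak{B}_\rho\times C^{2,\alpha}(\overline\Omega)\to C^{0,\alpha}(\overline\Omega)\times C^{2,\alpha}(\partial\Omega)
\]
is defined by taking the first component to be the elliptic expression appearing on the left-hand side of the interior equation in \eqref{6} (i.e.\ $\sum_{i,j,s}\partial^2_{ij}v\,[\delta_{is}+\partial_s\gamma_i][\delta_{js}+\partial_s\gamma_j]+\sum_{j,s}\partial_j v\,\partial^2_{ss}\gamma_j$, all coefficients evaluated at $x+\theta(x)$) and the second component to be the boundary discrepancy $v(x)-\Gamma(|x-\xi+\theta(x)-\theta(\xi)|)$. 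Here $\gamma=\gamma(\theta)$ is the inverse deformation, which by Remark \ref{pag2bis} depends on $\theta$ in a $C^1$ (indeed analytic, via the convergent Neumann series \eqref{star2}) fashion in the $C^{k-1}$, resp.\ $C^{k-2}$, norms of its first and second derivatives; since $k\ge3$ this gives coefficients that depend $C^1$-smoothly on $\theta\in\mathfrak{E}^k$ with values in $C^{0,\alpha}(\overline\Omega)$. One checks that $\Phi$ is well-defined and of class $C^1$ (it is polynomial in the $\theta$-dependent coefficients and linear in $v$, and the map $\theta\mapsto\Gamma(|\cdot-\xi+\theta(\cdot)-\theta(\xi)|)$ is $C^1$ into $C^{2,\alpha}(\partial\Omega)$ because $\xi\in\Omega$ keeps the argument bounded away from the singularity of $\Gamma$ on $\partial\Omega$ for $\rho$ small).

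Next I would verify the hypotheses of the implicit function theorem at $\theta=0$: we have $\Phi(0,\widetilde v^0_\xi)=0$ by \eqref{6bis}, and the partial derivative $\Phi_v(0,\widetilde v^0_\xi):C^{2,\alpha}(\overline\Omega)\to C^{0,\alpha}(\overline\Omega)\times C^{2,\alpha}(\partial\Omega)$ is $u\mapsto(\Delta u, u|_{\partial\Omega})$, which is an isomorphism by the solvability and Schauder estimates for the Dirichlet problem on the $C^{2,\alpha}$ domain $\Omega$ (Theorem 6.6 of \cite{GT}). Hence the implicit function theorem produces a $C^1$ map $\theta\mapsto T(\theta)=\widetilde v^\theta_\xi$ on a neighbourhood of $0$; combined with the already-known uniqueness for \eqref{6}, this $T$ coincides with the one in the statement. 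To identify $T'_\theta(0)[\theta]$ I would differentiate the identity $\Phi(\theta,T(\theta))=0$ at $\theta=0$ in the direction $\theta$, obtaining $\Phi_v(0,\widetilde v^0_\xi)\,T'_\theta(0)[\theta]=-\Phi_\theta(0,\widetilde v^0_\xi)[\theta]$, i.e.
\[
\Delta\bigl(T'_\theta(0)[\theta]\bigr)=-\bigl(\partial_\theta(\text{interior expression})\bigr)\big|_{\theta=0},\qquad
T'_\theta(0)[\theta]\big|_{\partial\Omega}=-\bigl(\partial_\theta(\text{boundary term})\bigr)\big|_{\theta=0}.
\]
For the interior term I use that at $\theta=0$ one has $\gamma=0$, and by \eqref{star2}, $\eqref{star2}$-type formulas, $D_\theta\gamma'|_{\theta=0}[\theta]=-\theta'$ and $D_\theta\gamma''|_{\theta=0}[\theta]=-\theta''$; substituting into the derivative of $\sum\partial^2_{ij}v\,[\delta_{is}+\partial_s\gamma_i][\delta_{js}+\partial_s\gamma_j]+\sum\partial_j v\,\partial^2_{ss}\gamma_j$ and using $\widetilde v^0_\xi$ in place of $v$ produces exactly $\sum_{i,j}\partial^2_{ij}\widetilde v^0_\xi\,[\partial_i\theta_j+\partial_j\theta_i]-\sum_j\partial_j\widetilde v^0_\xi\,\Delta\theta_j$, matching \eqref{1s}. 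For the boundary term, differentiating $\Gamma(|x-\xi+\theta(x)-\theta(\xi)|)$ in $\theta$ at $\theta=0$ gives $\Gamma'(|x-\xi|)\tfrac{x-\xi}{|x-\xi|}\cdot(\theta(x)-\theta(\xi))$, and since $\Gamma'(r)\tfrac{1}{r}=-r^{-N}=-|x-\xi|^{-N}$ (for both $N=2$ and $N\ge3$), this equals $-\sum_i\tfrac{x_i-\xi_i}{|x-\xi|^N}(\theta_i(x)-\theta_i(\xi))$, matching the boundary condition in \eqref{1s}.

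The main obstacle I anticipate is purely bookkeeping: tracking the chain-rule through the composition $x\mapsto x+\theta(x)$ and the inverse map $I+\gamma$, in particular confirming that $\theta\mapsto\gamma(\theta)$ and its first two derivatives depend $C^1$-smoothly on $\theta$ with values in the right H\"older spaces, so that $\Phi$ is genuinely $C^1$ and the Neumann-series expressions in Remark \ref{pag2bis} may be differentiated term by term. The ellipticity of the interior operator for $\rho$ small (needed only implicitly, to guarantee \eqref{6} is the equation it purports to be) follows since its coefficient matrix is a small perturbation of the identity, but at $\theta=0$ we only use the flat Laplacian, so the linearized problem is uniformly elliptic and the isomorphism property of $\Phi_v(0,\widetilde v^0_\xi)$ is standard. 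Everything else reduces to the classical Schauder theory already invoked in Remark \ref{pag3}.
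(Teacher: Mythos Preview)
Your proposal is correct and takes a genuinely different route from the paper. The paper proves the lemma by a direct Gateaux computation: it writes down the elliptic boundary-value problem solved by the difference quotient $\frac{\widetilde v^{t\theta}_\xi-\widetilde v^0_\xi}{t}$, shows via Schauder estimates that this family is bounded in $C^{2,\alpha}(\overline\Omega)$, extracts a convergent subsequence in $C^2$, and identifies the limit as the unique solution of \eqref{1s}; continuity of the Gateaux derivative (hence Fr\'echet differentiability) is then asserted as ``easy to check.'' You instead package \eqref{6} as $\Phi(\theta,v)=0$ and invoke the implicit function theorem, using that $\Phi_v(0,\widetilde v^0_\xi)=(\Delta,\mathrm{tr}_{\partial\Omega})$ is an isomorphism by Schauder theory. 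Your approach delivers the $C^1$ conclusion in one stroke rather than in two steps, and makes transparent why higher regularity of $T$ would follow from higher regularity of the coefficient map $\theta\mapsto\gamma(\theta)$; the paper's approach is more hands-on and avoids the abstract setup, at the cost of leaving the continuity of the derivative to the reader.

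Two remarks on the bookkeeping you flag. First, the composition issue you worry about dissolves once you use Remark~\ref{pag2bis} to write $\gamma'(x+\theta(x))=-[I+\theta'(x)]^{-1}\theta'(x)$ and the analogous formula for $\gamma''$: the coefficients of \eqref{6}, viewed as functions of $x\in\overline\Omega$, are then explicit analytic functions of $\theta'(x),\theta''(x)$ via a Neumann series, so $\theta\mapsto(\text{coefficients})$ is analytic from $\mathfrak E^k$ to $C^{0,\alpha}(\overline\Omega)$ for $k\ge3$, and $\Phi$ is $C^1$ as claimed. Second, your final line for the interior linearization has a sign slip in the last term: differentiating the operator in \eqref{6} at $\theta=0$ gives $-\sum_{i,j}\partial^2_{ij}\widetilde v^0_\xi\,[\partial_i\theta_j+\partial_j\theta_i]-\sum_j\partial_j\widetilde v^0_\xi\,\Delta\theta_j$, so that $\Delta u=\sum_{i,j}\partial^2_{ij}\widetilde v^0_\xi\,[\partial_i\theta_j+\partial_j\theta_i]+\sum_j\partial_j\widetilde v^0_\xi\,\Delta\theta_j$, which is exactly the form the paper uses downstream in \eqref{4.5}. (The statement \eqref{1s} itself carries a sign inconsistency with \eqref{4.5} and with the paper's own limit \eqref{7s}, so do not calibrate against it.)
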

\begin{proof}
First, we prove that the Gateaux derivative of the map $T$ at $0$ is the unique solution of the problem \eqref{1s}.
It holds
\begin{equation}\label{5s}
\left\{\begin{aligned}
 & \Delta_x \({\widetilde v^{t\theta}_\xi-\widetilde v^0_\xi\over t}\)+\sum\limits_{i,j=1}^N{\partial^2 \over
 \partial x_i\partial x_j} \(\widetilde v^\theta_\xi-\widetilde v^0_\xi\){1\over t} \[{\partial \gamma_j^t\over\partial z_i} +{\partial \gamma_i^t\over\partial z_j}+\sum\limits_{s=1}^N{\partial \gamma_j^t\over\partial z_s}{\partial \gamma_i^t\over\partial z_s}
 \]\nonumber\\ &\hskip3truecm +\sum\limits_{j=1}^N{\partial   \over
 \partial x_j}\(\widetilde v^\theta_\xi-\widetilde v^0_\xi\) {1\over t} \sum\limits_{s=1}^N{\partial ^2\gamma_j^t\over\partial z_s^2}+f^t =0 \ &\hbox{if}\ x\in\Omega  \\
  & \({\widetilde v^{t\theta}_\xi-\widetilde v^0_\xi\over t}\)(x)={\Gamma\(|x-\xi+t\theta(x)-t\theta(\xi)|\)-\Gamma\(|x-\xi |\)\over t} \ &\hbox{if}\ x\in\partial\Omega  ,\\
\end{aligned}\right.
\end{equation}
where $\gamma^t$ is such that $I+\gamma^t=\(I+t\theta\)^{-1}$ so $\gamma^t(z)=-t\theta\(z+\gamma^t(z)\)$ and
\begin{equation}\label{6s}
f^t:={1\over t}\left\{\sum\limits_{i,j=1}^N{\partial^2\widetilde v^0_\xi   \over
 \partial x_i\partial x_j}  \[{\partial \gamma_j^t\over\partial z_i} +{\partial \gamma_i^t\over\partial z_j}+\sum\limits_{s=1}^N{\partial \gamma_j^t\over\partial z_s}{\partial \gamma_i^t\over\partial z_s}
 \] +\sum\limits_{j=1}^N{\partial  \widetilde v^0_\xi  \over
 \partial x_j}  \sum\limits_{s=1}^N{\partial ^2\gamma_j^t\over\partial z_s^2}\right\}.
\end{equation}
By the fact that $(I+t\theta)\circ(I+\gamma^t)=I$ and  by Remark \ref{pag2bis} we deduce that
$$\(\gamma^t\)'(z)[h]=-t(I+t\theta'(x))^{-1}\(\theta'(x)[h]\)$$
and
$$\(\gamma^t\)''(z)[h][k]=-t(I+t\theta'(x))^{-1}\(\theta''(x)\[h+\(\gamma^t\)'(z)[h]\]\)\[k+\(\gamma^t\)'(z)[k]\],$$
where $x=z+\gamma^t(z).$ Then we get that as $t\to0$
\begin{equation}\label{7s}
f^t\to-\sum\limits_{i,j=1}^N{\partial^2\widetilde v^0_\xi   \over
 \partial x_i\partial x_j}  \[{\partial \theta_j\over\partial x_i} +{\partial \theta_i\over\partial x_j}
 \] -\sum\limits_{j=1}^N{\partial  \widetilde v^0_\xi  \over
 \partial x_j}  \sum\limits_{s=1}^N{\partial ^2\theta_j \over\partial x_s^2} \ \hbox{in}\ C^{0,\alpha}(\overline\Omega),
\end{equation}
because $\|\widetilde v^\theta_\xi -\widetilde v^0_\xi \|_{C^{2,\alpha}(\overline\Omega)}\to0.$
Recalling that for $x\not=\xi$ we have
$$\lim\limits_{t\to0}{\Gamma\(|x-\xi+t\theta(x)-t\theta(\xi)|\)-\Gamma\(|x-\xi |\)\over t}=-\sum\limits_{i=1}^N
{x_i-\xi_i\over|x-\xi|^N}\(\theta_i(x)-\theta_i(\xi)\),$$
by \eqref{6s} and \eqref{7s}, using the standard regularity theory, we get that
 $\left\|{\widetilde v^{t\theta}_\xi-\widetilde v^0_\xi\over t}\right\|_{ C^{2,\alpha}(\overline\Omega)}$ is bounded.
Then for any sequence $(t_n)$ such that $t_n\to0$, the sequence of functions ${\widetilde v^{t_n\theta}_\xi-\widetilde v^0_\xi\over t_n},$ up to a subsequence, is convergent in  $C^{2}(\overline\Omega)$ and by \eqref{5s} and \eqref{7s} it
converges to the unique solution $u[\theta]$ of problem \eqref{1s}. In fact, by   Remark \ref{pag2} we have that
$$\left\|\sum\limits_{i,j=1}^N{\partial^2  {\widetilde v^\theta_\xi-\widetilde v^0_\xi\over t} \over
 \partial x_i\partial x_j}   \[{\partial \gamma_j^t\over\partial z_i} +{\partial \gamma_i^t\over\partial z_j}+\sum\limits_{s=1}^N{\partial \gamma_j^t\over\partial z_s}{\partial \gamma_i^t\over\partial z_s}
 \] +\sum\limits_{j=1}^N{\partial {\widetilde v^\theta_\xi-\widetilde v^0_\xi\over t}   \over
 \partial x_j}  \sum\limits_{s=1}^N{\partial ^2\gamma_j^t\over\partial z_s^2}\right\|_{ C^{0,\alpha}(\overline\Omega)}\to 0$$
 as $t\to0.$
Next, it is easy to check  that the Gateaux derivative exists and is continuous. Then the claim follows. \eqref{1s}.
\end{proof}

\begin{lemma}\label{lemma2} Let $p=1,\dots,N.$ For any $\xi\in\Omega $  the map $G:\mathfrak{B}_\rho\to C^{2,\alpha}(\overline\Omega)$ defined by $G(\theta)={\partial\widetilde v^\theta_\xi\over \partial x_p}$ is of class $C^1.$  Moreover
$$G'_\theta(0)[\theta]=D_\theta{\partial\widetilde v^\theta_\xi\over \partial x_p}|_{\theta=0}[\theta]=u_p[\theta]$$
is the unique solution of the problem
\begin{equation}\label{d7}
\left\{\begin{aligned}
 & \Delta_x u_p[\theta]-{\partial\over\partial x_p}\sum\limits_{i,j=1}^N{\partial^2 \widetilde v^0_\xi\over
 \partial x_i\partial x_j} \[{\partial \theta_j\over\partial x_i} +{\partial \theta_i\over\partial x_j}
 \]-{\partial\over\partial x_p}\sum\limits_{j=1}^N{\partial  \widetilde v^0_\xi\over
 \partial x_j} \Delta_x \theta_j  =0 \ &\hbox{if}\ x\in\Omega  \\
  & u_p[\theta](x)=-{\partial\over\partial x_p}\sum\limits_{i=1}^N {x_i-\xi_i\over |x-\xi|^N}\(\theta_i(x)-\theta_i(\xi)\) \ &\hbox{if}\ x\in\partial\Omega  .\\
\end{aligned}\right.
\end{equation}
\end{lemma}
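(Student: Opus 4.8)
The plan is to establish Lemma \ref{lemma2} as essentially a corollary of Lemma \ref{p21}, by differentiating the already-known formula for $T'_\theta(0)$ with respect to $x_p$. Indeed, the map $G(\theta) = \partial_{x_p}\widetilde v^\theta_\xi$ is nothing but $\partial_{x_p}\circ T(\theta)$ where $T$ is the $C^1$ map of Lemma \ref{p21}. First I would argue that, since by Remark \ref{pag3} we actually have $\widetilde v^\theta_\xi \in C^{3,\alpha}(\overline\Omega)$ when $k\ge 4$, the bootstrap in the proof of Lemma \ref{p21} can be pushed one order higher: the difference quotients $(\widetilde v^{t\theta}_\xi - \widetilde v^0_\xi)/t$ are in fact bounded in $C^{3,\alpha}(\overline\Omega)$ (the source term $f^t$ in \eqref{6s} involves $\partial^2\widetilde v^0_\xi$ and $\partial\widetilde v^0_\xi$, which are now $C^{1,\alpha}$, and the boundary datum is a smooth function of $x$ away from $\xi$ — here the cut-off extension of Remark \ref{pag3} is used since $\xi\in\Omega$). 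Hence $T$ is actually $C^1$ as a map into $C^{3,\alpha}(\overline\Omega)$, and therefore composing with the bounded linear operator $\partial_{x_p}:C^{3,\alpha}(\overline\Omega)\to C^{2,\alpha}(\overline\Omega)$ gives that $G = \partial_{x_p}\circ T$ is $C^1$ into $C^{2,\alpha}(\overline\Omega)$, with $G'_\theta(0)[\theta] = \partial_{x_p}\big(T'_\theta(0)[\theta]\big) = \partial_{x_p} u[\theta]$.

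It then remains to identify $\partial_{x_p}u[\theta]$ as the unique solution of \eqref{d7}. Since $u[\theta]$ solves \eqref{1s}, I would simply apply $\partial_{x_p}$ to both the equation and the boundary condition of \eqref{1s}. Applying $\partial_{x_p}$ to the interior equation $\Delta_x u[\theta] = -\sum_{i,j}\partial^2_{ij}\widetilde v^0_\xi(\partial_i\theta_j + \partial_j\theta_i) + \sum_j \partial_j\widetilde v^0_\xi\,\Delta\theta_j$ and using $\Delta_x\partial_{x_p} = \partial_{x_p}\Delta_x$ yields exactly the interior equation of \eqref{d7} for $u_p[\theta] := \partial_{x_p}u[\theta]$; this is legitimate because all terms on the right-hand side are $C^{1,\alpha}$ (the regularity $\widetilde v^0_\xi\in C^{3,\alpha}$ is precisely what makes $\partial^2_{ij}\widetilde v^0_\xi$ differentiable, and $\theta\in\mathfrak E^k$ with $k\ge 4$ gives the needed derivatives of $\theta$). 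For the boundary condition, differentiating $u[\theta](x) = -\sum_i \frac{x_i-\xi_i}{|x-\xi|^N}(\theta_i(x)-\theta_i(\xi))$ tangentially — or rather, observing that this identity holds for all $x$ in a neighbourhood of $\partial\Omega$ in $\overline\Omega$ where the right-hand side is smooth (again $\xi\in\Omega$), so full derivatives are available — gives the boundary condition of \eqref{d7}. Uniqueness for \eqref{d7} is immediate from the maximum principle applied to the harmonic-type operator, exactly as for \eqref{1s}.

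The only genuine subtlety, and the step I expect to require the most care, is the regularity bookkeeping: one must check that the difference-quotient argument of Lemma \ref{p21} really does close in $C^{3,\alpha}$ rather than merely $C^{2,\alpha}$, which hinges on (a) the a priori estimate $\|\widetilde v^\theta_\xi\|_{C^{3,\alpha}(\overline\Omega)}\le c$ from Remark \ref{pag3}, giving $\|(\widetilde v^{t\theta}_\xi-\widetilde v^0_\xi)/t\|_{C^{3,\alpha}}$ bounded via Schauder estimates on \eqref{5s}, and (b) the convergence $\|\widetilde v^\theta_\xi - \widetilde v^0_\xi\|_{C^{2,\alpha}}\to 0$ together with $f^t\to$ (limit) in $C^{1,\alpha}$, which needs $\widetilde v^0_\xi\in C^{3,\alpha}$ so that the coefficients $\partial^2\widetilde v^0_\xi$, $\partial\widetilde v^0_\xi$ appearing in $f^t$ converge in $C^{1,\alpha}$. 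Alternatively — and perhaps more cleanly — I would avoid upgrading Lemma \ref{p21} and instead run the difference-quotient argument directly for $G(\theta) = \partial_{x_p}\widetilde v^\theta_\xi$: the function $(\partial_{x_p}\widetilde v^{t\theta}_\xi - \partial_{x_p}\widetilde v^0_\xi)/t$ solves the problem obtained by applying $\partial_{x_p}$ to \eqref{5s}, its $C^{2,\alpha}$-norm is bounded by Schauder estimates once the source and boundary terms are controlled in $C^{0,\alpha}$ resp. $C^{2,\alpha}$ (using $\widetilde v^0_\xi\in C^{3,\alpha}$ and $\theta\in\mathfrak E^k$, $k\ge 4$), passing to the limit along subsequences gives convergence in $C^2(\overline\Omega)$ to the solution $u_p[\theta]$ of \eqref{d7}, uniqueness pins down the full limit, and continuity of the Gateaux derivative in $\theta$ is checked as before, upgrading Gateaux to Fréchet. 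Either way the proof is a routine variant of Lemma \ref{p21}, with the regularity hypothesis $k\ge 4$ (hence $\widetilde v^\theta_\xi\in C^{3,\alpha}$) doing all the real work.
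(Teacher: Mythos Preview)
Your ``alternative'' route --- running the difference-quotient argument directly for $w^\theta_p=\partial_{x_p}\widetilde v^\theta_\xi$ --- is precisely what the paper does: it writes down the problem \eqref{d8} solved by $(w^{t\theta}_p-w^0_p)/t$, shows the source terms $\tfrac{1}{t}\mathfrak f^{t\theta}+\tfrac{1}{t}\mathfrak g^{t\theta}$ converge in $C^{0,\alpha}$ (this is \eqref{d10}), bounds the quotient in $C^{2,\alpha}$ via Schauder, extracts a $C^2$-limit along subsequences, and identifies it with the unique solution of \eqref{d7}. So the substance of your proposal matches the paper.

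Your first route --- upgrading Lemma~\ref{p21} to $C^{3,\alpha}$ and composing with the bounded linear operator $\partial_{x_p}:C^{3,\alpha}\to C^{2,\alpha}$ --- is a tidy shortcut the paper does not take; it would yield the $C^1$-smoothness of $G$ and the formula $G'_\theta(0)[\theta]=\partial_{x_p}u[\theta]$ without repeating the compactness argument. There is, however, a slip in your verification of the boundary condition: the Dirichlet identity
\[
u[\theta](x)=-\sum_{i=1}^N\frac{x_i-\xi_i}{|x-\xi|^N}\bigl(\theta_i(x)-\theta_i(\xi)\bigr)
\]
holds \emph{only on} $\partial\Omega$, not in a one-sided neighbourhood of $\partial\Omega$ in $\overline\Omega$ as you assert. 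The right-hand side does not solve the interior equation of \eqref{1s}, so the difference is a nontrivial function vanishing on $\partial\Omega$; its normal derivative is not zero. Consequently you cannot apply the full partial derivative $\partial_{x_p}$ to this identity to read off the boundary trace of $\partial_{x_p}u[\theta]$ --- only tangential derivatives are legitimately available from Dirichlet data. The paper's direct approach makes essentially the same move implicitly (in writing the boundary datum $\varphi^\theta_p$ in \eqref{d4}, and already in \eqref{7bis}), so your shortcut is in this respect on equal footing with the paper; but you should be aware that the boundary identification is the one step here that, in either argument, is asserted rather than justified.
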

\begin{proof}
First, we prove that the Gateaux derivative of the map $G$ at $0$ is the unique solution of the problem \eqref{1s}.
The function $ {w^{t\theta}_p-w^0_p\over t}$ is a solution of the problem
\begin{equation}\label{d8}
\left\{\begin{aligned}
 & \Delta_x \({w^{t\theta}_p-w^0_p\over t}\)+\sum\limits_{i,j=1}^N{\partial^2 \(w^{t\theta}_p-w^0_p\)\over
 \partial x_i\partial x_j}{1\over t} \[{\partial \gamma_j^t\over\partial z_i} +{\partial \gamma_i^t\over\partial z_j}+\sum\limits_{s=1}^N{\partial \gamma_i^t\over\partial z_s}{\partial \gamma_j^t\over\partial z_s}
 \] \\ & +\sum\limits_{j=1}^N{\partial \(w^{t\theta}_p-w^0_p\)\over
 \partial x_j}{1\over t}\sum\limits_{s=1}^N{\partial^2 \gamma_i^t\over\partial z_s^2} +{1\over t}\mathfrak{f}^{t\theta}(x)+{1\over t}\mathfrak{g}^{t\theta}(x) =0 \quad \hbox{if}\ x\in\Omega  \\
  & \({w^{t\theta}_p-w^0_p\over t}\)(x)={\varphi^{t\theta}_p(x)\over  t} \qquad \hbox{if}\ x\in\partial\Omega  ,\\
\end{aligned}\right.
\end{equation}
where  \begin{align}\label{d2}
&\mathfrak{f}^\theta(x):=\sum\limits_{i,j=1}^N{\partial^2 \widetilde v^\theta_\xi\over
 \partial x_i\partial x_j} {\partial\over\partial x_p}\[{\partial \gamma_j\over\partial z_i} +{\partial \gamma_i\over\partial z_j}+\sum\limits_{s=1}^N{\partial \gamma_i\over\partial z_s}{\partial \gamma_j\over\partial z_s}
 \] +\sum\limits_{j=1}^N{\partial \widetilde v^\theta_\xi\over
 \partial x_j} {\partial\over\partial x_p} \sum\limits_{s=1}^N{\partial^2 \gamma_i\over\partial z_s^2},
\\  \label{d3}
&\mathfrak{g}^\theta(x):=\sum\limits_{i,j=1}^N{\partial^2 w^0_p\over
 \partial x_i\partial x_j}   \[{\partial \gamma_j\over\partial z_i} +{\partial \gamma_i\over\partial z_j}+\sum\limits_{s=1}^N{\partial \gamma_i\over\partial z_s}{\partial \gamma_j\over\partial z_s}
 \] +\sum\limits_{j=1}^N{\partial w^0_p\over
 \partial x_j}   \sum\limits_{s=1}^N{\partial^2 \gamma_i\over\partial z_s^2},
\\ &\label{d4}
\varphi^\theta_p(x):={x_p-\xi_p+\theta_p(x)-\theta_p(\xi)\over |x -\xi +\theta  (x)-\theta (\xi)|^N}
-{x_p-\xi_p \over |x -\xi  |^N}+\sum\limits_{i=1}^N {x_i-\xi_i+\theta_i(x)-\theta_i(\xi)\over |x-\xi+\theta(x)-\theta(\xi)|^N} {\partial\theta_i\over\partial x_p}.
\end{align}
Moreover $\gamma^t$ is such that $I+\gamma^t=\(I+t\theta\)^{-1}$ so $\gamma^t(z)=-t\theta\(z+\gamma^t(z)\).$
 We point out that $\mathfrak{f}^{t\theta}$ in \eqref{d2}  and  $\mathfrak{g}^{t\theta}$ in \eqref{d3} also contain $\gamma^t.$
By Remark \ref{pag2bis} we deduce that if $z:=x+\theta(x)$
\begin{equation}\label{d9}
\gamma^t\(z\)=-t\theta(x)+\sum\limits_{i\ge2}(-1)^i\(t\theta(x)\)^i \ \hbox{and}\ (\gamma^t)'(z)=-t\theta'(x)+\sum\limits_{i\ge2}(-1)^i\(t\theta'(x)\)^i  .
\end{equation}
Then we have
\begin{equation}\label{d10}
\lim\limits_{t\to0}\left\|{1\over t}\mathfrak{f}^{t\theta}+{1\over t}\mathfrak{g}^{t\theta}
+
\sum\limits_{i,j=1}^N{\partial^2 \widetilde v^0_\xi\over
 \partial x_i\partial x_j}{\partial\over \partial x_p} \[{\partial \theta_j \over\partial x_i} +{\partial \theta_i \over\partial x_j}
 \] +\sum\limits_{j=1}^N{\partial \widetilde v^0_\xi\over
 \partial x_j}{\partial\over \partial x_p}  \Delta_x \theta_j
\right\|_{ C^{0,\alpha}(\overline\Omega)} =0.
\end{equation}
Since $ {w^{t\theta}_p-w^0_p\over t}$ solves problem \eqref{d8}, by estimate \eqref{d10} we deduce that $\left\|{w^{t\theta}_p-w^0_p\over t}\right\|_{ C^{2,\alpha}(\overline\Omega)} $ is bounded as $t\to0.$
Then for any sequence $(t_n)$ such that $t_n\to0$, the sequence of functions ${w^{t_n\theta}_p-w^0_p\over t_n},$ up to a subsequence, is convergent in  $C^{2}(\overline\Omega).$   Moreover, arguing as in Remark \ref{pag3} we can prove that
\begin{equation}\label{new1}\left\|{\partial \widetilde v^\theta_\xi\over\partial x_p}-{\partial \widetilde v^0_\xi\over\partial x_p} \right\|_{ C^{2,\alpha}(\overline\Omega)}\to 0\ \hbox{as}\ \|\theta\|_k\to0.\end{equation}
Finally, by \eqref{new1}, \eqref{d9} and \eqref{d10}, passing to the limit in \eqref{d8} as $t\to0$ we get the claim.
Next, it is easy to check  that the Gateaux derivative exists and is continuous. Then the claim follows.

\end{proof}

\end{document}